\begin{document}

\newtheorem{theorem}{Theorem}[section]
\newtheorem{corollary}[theorem]{Corollary}
\newtheorem{proposition}[theorem]{Proposition}
\newtheorem{lemma}[theorem]{Lemma}
\theoremstyle{definition}
\newtheorem{definition}[theorem]{Definition}
\theoremstyle{remark}
\newtheorem{remark}[theorem]{\bf Remark}
\newcommand{\ds}{\displaystyle}
\newcommand{\R}{\mathbb{R}}
\newcommand{\noi}{\noindent}
\newcommand{\ts}{\thinspace}
\newcommand{\leqn}[1]{\\*\\* \indent #1 \\*\\*}
\newcommand{\leqnn}[1]{\\*\\* \indent #1 \\*}
\newcommand{\bleqn}[1]{\\*\\*\\* \indent #1 \\*}
\renewcommand{\thesubsection}{}
\newcommand{\norm}[1]{\left \lVert #1 \right \rVert}
\newcommand{\biarraysmall}[2]{\begin{array}{c} #1 \vspace{-.20cm}\\ #2  \end{array}}
\newcommand{\biarray}[2]{\begin{array}{c} #1 \vspace{.3cm}\\ #2  \end{array}}
\newcommand{\triarray}[3]{\begin{array}{c} #1 \vspace{.15cm}\\ #2 \vspace{.15cm}\\ #3 \end{array}}
\newcommand{\quadarray}[4]{\begin{array}{c} #1 \vspace{.15cm}\\ #2 \vspace{.15cm}\\ #3 \vspace{.15cm}\\ #4 \end{array}}
\newcommand{\quintarray}[5]{\begin{array}{c} #1 \vspace{.15cm}\\ #2 \vspace{.15cm}\\ #3 \vspace{.15cm}\\ #4 \vspace{.15cm}\\ #5 \end{array}}
\newcommand{\barr}{\begin{eqnarray}}
\newcommand{\beq}{\begin{equation}}
\newcommand{\bpf}{\begin{proof} \quad}
\newcommand{\btm}{\begin{thm}}
\newcommand{\blem}{\begin{lem}}
\newcommand{\elem}{\end{lem}}
\newcommand{\earr}{\end{eqnarray}}
\newcommand{\eeq}{\end{equation}}
\newcommand{\epf}{\end{proof}}
\newcommand{\etm}{\end{thm}}
\newcommand{\beqq}{\begin{equation*}}
\newcommand{\eeqq}{\end{equation*}}
\newcommand{\ep}{\varepsilon}

\title[Nonlinear multipoint bvp's]{Nonlinear scalar multipoint boundary value problems at resonance }
\author[Daniel Maroncelli]{Daniel Maroncelli}
\address{Department of Mathematics, College of Charleston, Charleston, SC 29424-0001, U.S.A}
\email{maroncellidm@cofc.edu}
\maketitle

\begin{center}\textbf{{\tiny($^{**}$The final version of this manuscript has been accepted for publication in Journal of Difference Equations and Applications)}}\end{center}

{\footnotesize
\noindent
{\bf Abstract.}   In this work we provide conditions for the existence of solutions to nonlinear boundary value problems of the form
\begin{equation*}
y(t+n)+a_{n-1}(t)y(t+n-1)+\cdots a_0(t)y(t)=g(t,y(t+m-1))
\end{equation*}
subject to
\begin{equation*}
\sum_{j=1}^nb_{ij}(0)y(j-1)+\sum_{j=1}^nb_{ij}(1)y(j)+\cdots+\sum_{j=1}^nb_{ij}(N)y(j+N-1)=0
\end{equation*}
for $i=1,\cdots, n$.
The existence of solutions will be proved under a mild growth condition on the nonlinearity, $g$,  which must hold only on a bounded subset of $\{0,\cdots, N\}\times\R$.

{\bf Keywords.}  Multipoint boundary value problems; Resonance;  Lyapunov-Schmidt procedure; Brouwer's degree



\section{Introduction}
\setlength{\parskip}{.25cm plus 0.05cm minus 0.02cm}

 In this paper we provide criteria for the solvability of nonlinear scalar multipoint  boundary value problems of the form
\begin{equation}
y(t+n)+a_{n-1}(t)y(t+n-1)+\cdots a_0(t)y(t)=g(t,y(t+m-1))\label{E:Scalar}
\end{equation}
subject to
\begin{equation}
\sum_{j=1}^nb_{ij}(0)y(j-1)+\sum_{j=1}^nb_{ij}(1)y(j)+\cdots+\sum_{j=1}^nb_{ij}(N)y(j+N-1)=0\label{E:SBoundary}
\end{equation}
for $i=1,\cdots, n$.

Throughout our discussion we will assume that $g:\R\times\mathbb{R}\to\R$ is continuous, $m$ is fixed with $1\leq m\leq n$, $N$ is an integer greater than 2, the coefficients $b_{ij}(\cdot)$ and $a_0(\cdot), \cdots, a_{n-1}(\cdot)$ are real-valued with $a_0(t)\neq 0$ for all $t$,
and the boundary conditions are independent.

We focus on the solvability of nonlinear boundary value problems at resonance; that is, problems where the solution space of the associated  linear homogeneous problem, (\ref{A:Homog}), subject to boundary conditions, (\ref{E:SBoundary1}), is nontrivial. We will assume throughout that the solution space of this  linear homogeneous problem is $1$-dimensional.

In a vast majority of the literature on resonant boundary value problems, see \cite{CesariAbstract, EthridgePeriodic,LandesmanElliptic,LazerHarmonic,MaroncelliDiscreteBounded,MaroncelliNonlinearImp,Pollack,RodSturm,Etheridgesecondorder,RodriguezProjection,RodTaylorscalardiscrete,RodTaylorode,Rod2008}, it is assumed that the nonlinearities of the difference (differential) equation are bounded. Recently, there has been a large push to obtain existence results in cases where the nonlinearity of the difference (differential) equation is unbounded. There have been several results in this regard, most of which require $g$ to satisfy a growth condition on intervals of the form $(-\infty, z_0]$ and $[z_0,\infty)$. For interested readers, we mention \cite{DrabekRes,DrabekLL,MaroncelliMulti, MaroncelliSturm,MaroncelliPeriodic,RodAbSturmGlobal,RodriguezAlternativeMethod}.
Our focus will be on the case where the nonlinearity is allowed to be unbounded, but must satisfy a mild growth condition on a bounded subset of $\{0,\cdots, N\}\times\R$.  Those readers interested in results obtained for the case of nonresonant difference equations may consult \cite{HendersonThreepoint,HendersonPost,HendersonPostMult,HendersonThompson,LiuMult,Zhou,ZhangMultipoint}.

Our main result is Theorem \ref{T:Main}, which establishes the existence of solutions to \eqref{E:Scalar}-\eqref{E:SBoundary} under suitable interaction of the solution space
of the linear homogeneous problem and the nolinearity $g$. We would like to remark that the result we obtain in Theorem \ref{T:Main} constitutes a significant generalization of the work 
found in \cite{MaSturm, RodTaylorscalardiscrete}. In \cite{MaSturm}, the author discusses the existence of solutions to \eqref{E:Scalar}-\eqref{E:SBoundary} in the special case of nonlinear Sturm-Liouville problems with standard two-point linear boundary conditions. In \cite{RodTaylorscalardiscrete}, the authors discuss the existence of solutions to \eqref{E:Scalar}-\eqref{E:SBoundary} under the assumption of bounded nonlinearities that must also satisfy a limit condition at $\pm\infty$. This limit assumption is quite standard, often referred to as a Landesman-Lazer type condition.  In section 4, we give a detailed comparison between Theorem \ref{T:Main} and the work from \cite{MaSturm, RodTaylorscalardiscrete}.

Our main tool in the analysis of Theorem \ref{T:Main} will be the application of an alternative method in combination with Brouwer's degree theory. The application of these ideas to discrete and continuous nonlinear boundary value problems is extensive.  For those readers interested in fixed point methods, coincidence degree theory, the Lyapunov-Schmidt procedure or more general alternative methods, and their application to difference and differential equations,
we suggest \cite{Chow,EthridgePeriodic,Coincidence,Iannucci, JiangSecondOrder,LazerHarmonic,Pollack,RodriguezAlternativeMethod,RodGal,Etheridgesecondorder,RodriguezProjection,RodTaylordiscrete,RodTaylorode,Rod2008,Rouche,Zhao} and the references therein.

 \section{Preliminaries}

 The nonlinear boundary value problem (\ref{E:Scalar})-(\ref{E:SBoundary}) will be viewed as an operator problem. To help facilitate in the construction of this problem,
we define

\begin{equation*}
A(t)=
\begin{pmatrix}
0&1&0&\cdots& 0\\
0&0&1&\cdots& 0\\
\vdots&\vdots&&\ddots&\vdots\\
0&0&0&\cdots&1\\
-a_0(t)&-a_1(t)&-a_2(t)&\cdots&-a_{n-1}(t)
\end{pmatrix},
\end{equation*}
 
 $f:\R\times{\R}^n\to{\R}^n $ by 
\begin{equation*}
f\begin{pmatrix}t\\x_1\\x_2\\\vdots\\x_{n-1}\\x_n\end{pmatrix}=\begin{pmatrix}0\\0\\\vdots\\0\\g(t,x_m)\end{pmatrix},
\end{equation*}

\noindent and $n\times n$ matrices $B_k, k=0,\cdots,N$, by
\begin{equation*}
(B_k)_{ij}=b_{ij}(k).
\end{equation*}

The nonlinear boundary value problem (\ref{E:Scalar})-(\ref{E:SBoundary}) is now equivalent to the nonlinear system

\begin{equation}
 x(t+1)=A(t)x(t)+f(t,x(t)) \hspace{.25cm}  \label{E:System} 
 \end{equation}                          
 subject to boundary conditions
 \begin{equation}
 \sum_{i=0}^NB_ix(i)=0. \label{E:SBoundary1} 
 \end{equation}

The underlying function spaces for our operator problem are as follows:

\begin{equation*}
 \ds{X=\left\{\phi:\left\{0,1,2,\cdots, N\right\}\to {\R}^n \left|\right.\ts \sum_{i=0}^NB_i\phi(i)=0\right\},}
\end{equation*}
\noindent and  
\begin{equation*}
\ds{Z=\left\{\phi:\left\{0,1,2,\cdots, N-1\right\}\to {\R}^n\right\}.}
\end{equation*} 

The topologies used on $X$ and $Z$ are that of the supremum norm. We use $\norm{\cdot}$ to denote both norms and we will use $|\cdot|$ to denote the standard Euclidean norm.
 
We define operators as follows:

 $\mathcal{L}: X\to Z$  by
 \begin{equation*}
(\mathcal{L}x)(t)=x(t+1)- A(t)x(t)\label{O:L},
\end{equation*}
and

$\mathcal{F}: X \to Z$ by
\begin{equation*}\mathcal{F}(x)(t)=f(t,x(t)).
\end{equation*}

\noindent Solving the nonlinear boundary value problem (\ref{E:Scalar})-(\ref{E:SBoundary}) is now equivalent to solving 
\begin{equation}
\mathcal{L}x=\mathcal{F}(x).\label{E:SOper}
\end{equation}

\begin{remark}\label{R:Bound}It will be important to know that the very natural assumption regarding the independence of the boundary conditions is equivalent to the augmented matrix $[B_0,\cdots, B_N]$ having full row rank, and thus is also equivalent to  $Ker(\cap_{k=0}^N B_k^T)=\{0\}$.  See Definition \ref{D:Proj} and \cite{RodTaylordiscrete}.
\end{remark}

 
Crucial to the use of any alternative method is the construction of projections onto the kernel and image of $\mathcal{L}$; to aid in the construction of these projections, we obtain a complete description of these spaces.  The following characterization of kernel and image of $\mathcal{L}$ can be found in \cite{MaroncelliDiscreteBounded}.
 
 Let 
\begin{equation*}\Phi(t)=\begin{cases}I&\hspace{.5cm} \text{ if }  t=0\\
A(t-1)A(t-2)\cdots A(0) &\hspace{.5cm} \text{ if }t=1,2, \cdots
\end{cases}.
\end{equation*}
We then have that $\Phi$ is the principal fundamental matrix solution to linear homogeneous problem
 \begin{equation}
  x(t+1)=A(t)x(t)\label{A:Homog}.
  \end{equation} 
For those readers interested in the general theory of difference equations, we suggest \cite{Elaydi,Kelley}.

\begin{proposition}
\label{P:Perp}An element $h\in Z$  is contained in the $Im(\mathcal{L})$ if and only if 
\begin{equation*}B_1\Phi(1)\Phi^{-1}(1)h(0)+\cdots+B_N\Phi(N)\sum_{i=0}^{N-1}\Phi^{-1}(i+1)h(i)\in Ker\left(\left(\sum_{i=0}^NB_i\Phi(i)\right)^T\right)^\perp.\end{equation*} 
 \end{proposition}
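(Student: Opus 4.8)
The plan is to reduce membership in $Im(\mathcal{L})$ to a finite-dimensional linear-algebra solvability condition. Fix $h \in Z$; the equation $\mathcal{L}x = h$ is the first-order linear recursion $x(t+1) = A(t)x(t) + h(t)$ for $t = 0, \ldots, N-1$, and I would solve it explicitly by variation of parameters. Since $a_0(t) \neq 0$, each companion matrix $A(t)$ has $\det A(t) = \pm a_0(t) \neq 0$, so $A(t)$ and hence $\Phi(t)$ are invertible; this legitimizes the factors $\Phi^{-1}(i+1)$ appearing in the statement.

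First I would verify, by a direct induction on $t$, that the general solution of $x(t+1) = A(t)x(t) + h(t)$ is
$$x(t) = \Phi(t)\left[c + \sum_{i=0}^{t-1} \Phi^{-1}(i+1) h(i)\right], \qquad c := x(0) \in \R^n,$$
where the empty sum at $t=0$ is understood to be zero. The inductive step uses the defining relation $A(t)\Phi(t) = \Phi(t+1)$ together with $h(t) = \Phi(t+1)\Phi^{-1}(t+1)h(t)$. Thus every solution of $\mathcal{L}x = h$ is parametrized by its initial value $c$, and $h \in Im(\mathcal{L})$ precisely when some choice of $c$ makes the resulting $x$ lie in $X$, i.e.\ satisfy $\sum_{k=0}^N B_k x(k) = 0$.

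Next I would substitute the variation-of-parameters formula into the boundary functional. Writing $M := \sum_{k=0}^N B_k \Phi(k)$ and using that the $k=0$ term of the resulting double sum vanishes (since that sum is empty), the constraint $\sum_{k=0}^N B_k x(k) = 0$ becomes the linear system
$$M c = -\sum_{k=1}^N B_k \Phi(k) \sum_{i=0}^{k-1} \Phi^{-1}(i+1) h(i)$$
in the unknown $c$. Expanding the right-hand side term by term ($k=1$ gives $B_1\Phi(1)\Phi^{-1}(1)h(0)$, and $k=N$ gives $B_N\Phi(N)\sum_{i=0}^{N-1}\Phi^{-1}(i+1)h(i)$) identifies it, up to sign, with exactly the vector appearing in the Proposition. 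Hence $h \in Im(\mathcal{L})$ if and only if this system is consistent, that is, if and only if that vector lies in $Im(M)$.

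Finally I would invoke the elementary identity $Im(M) = Ker(M^T)^\perp$, valid for any real matrix, to rewrite the consistency condition in the stated form with $M = \sum_{k=0}^N B_k \Phi(k)$. The computation is essentially routine; the only points needing care are the correct bookkeeping of the index ranges in the double sum (so that the $k=0$ term drops and the remaining terms line up with the displayed expression) and the justification that $\Phi$ is invertible. I do not expect a genuine obstacle here, since the argument is the standard Lyapunov--Schmidt image characterization for a linear difference operator subject to linear boundary constraints.
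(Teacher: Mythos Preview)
Your proposal is correct and follows essentially the same approach the paper indicates: the paper does not give a detailed proof but states that the result follows easily from the variation-of-parameters formula \eqref{E:Var} together with an application of the boundary conditions \eqref{E:SBoundary1}, which is precisely the computation you outline. Your added remarks on the invertibility of $\Phi(t)$ and the identity $Im(M)=Ker(M^T)^\perp$ make explicit the only points that could be in doubt.
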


The proof of Proposition \ref{P:Perp} is trivial and can be found in \cite{MaroncelliDiscreteBounded}.  It follows easily from the variation of parameters formula,
\begin{equation}
 x(t)=\Phi(t)x(0)+\Phi(t)\sum_{i=0}^{t-1}\Phi^{-1}(i+1)h(i)\label{E:Var},
\end{equation}
 and an application of the boundary conditions (\ref{E:SBoundary1}).

\begin{proposition}
\label{P:KerL}$\ds{ Ker\left(\sum_{i=0}^NB_i\Phi(i)\right)}$ and the solution space of the linear homogeneous problem, (\ref{A:Homog}), subject to the boundary conditions ,(\ref{E:SBoundary1}),  have the same dimension.
\end{proposition}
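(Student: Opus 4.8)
The plan is to construct an explicit linear isomorphism between the subspace $\operatorname{Ker}\left(\sum_{i=0}^{N}B_i\Phi(i)\right)\subseteq\R^n$ and the solution space of the homogeneous problem \eqref{A:Homog} subject to the boundary conditions \eqref{E:SBoundary1}, from which equality of dimensions follows immediately. Throughout, let $S$ denote this solution space.

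First, I would use the fact that $\Phi$ is the principal fundamental matrix for \eqref{A:Homog} to record that every solution $x$ of $x(t+1)=A(t)x(t)$ is completely determined by its value at $t=0$ through $x(t)=\Phi(t)x(0)$; this is exactly the variation of parameters formula \eqref{E:Var} with forcing term $h\equiv 0$. Conversely, for any $c\in\R^n$ the sequence $t\mapsto\Phi(t)c$ solves \eqref{A:Homog}. Thus the unconstrained solution space of \eqref{A:Homog} is parametrized bijectively by the initial value $c=x(0)\in\R^n$.

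Next, I would impose the boundary conditions. Substituting $x(i)=\Phi(i)c$ into \eqref{E:SBoundary1} yields $\left(\sum_{i=0}^{N}B_i\Phi(i)\right)c=0$, so a homogeneous solution satisfies the boundary conditions precisely when its initial value $c$ lies in $\operatorname{Ker}\left(\sum_{i=0}^{N}B_i\Phi(i)\right)$. This motivates defining the linear map $T\colon\operatorname{Ker}\left(\sum_{i=0}^{N}B_i\Phi(i)\right)\to S$ by $(Tc)(t)=\Phi(t)c$. The two observations above show that $T$ is well defined and surjective, and linearity is clear.

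The one point requiring care — and the only real, albeit minor, obstacle — is the injectivity of $T$, but this is immediate from $\Phi(0)=I$: if $Tc\equiv 0$, then $0=(Tc)(0)=\Phi(0)c=c$. Hence $T$ is a linear isomorphism, and $\operatorname{Ker}\left(\sum_{i=0}^{N}B_i\Phi(i)\right)$ and the solution space $S$ of \eqref{A:Homog} subject to \eqref{E:SBoundary1} have the same dimension, as claimed.
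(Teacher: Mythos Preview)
Your proof is correct and follows essentially the same approach as the paper: both identify the solution space with $\operatorname{Ker}\left(\sum_{i=0}^N B_i\Phi(i)\right)$ via the map $u\mapsto\Phi(\cdot)u$, using the variation of parameters formula with $h=0$ and $\Phi(0)=I$. The paper states this correspondence as a one-line biconditional, while you spell out the linear isomorphism explicitly and verify surjectivity and injectivity, but the underlying argument is the same.
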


\begin{proof}
 Taking $h=0$ in the variation of parameters formula, (\ref{E:Var}), and applying the boundary conditions, we have
 \begin{equation*} 
 \mathcal{L}x=0 
 \text{ if and only if } \exists u\in {\R}^n \text{ such that }  x(\cdot)=\Phi(\cdot)u \text{ and }\sum_{i=0}^NB_i\Phi(i)u=0.
 \end{equation*} \end{proof}

Since we are assuming  that the solution space of the linear homogeneous problem, (\ref{A:Homog}), subject to boundary conditions, (\ref{E:SBoundary1}), is 1-dimensional, it follows from Proposition \ref{P:KerL} that  we may pick a vector $u\in {\R}^n$ which forms a  basis for  $\ds{Ker\left(\sum_{i=0}^NB_i\Phi(i)\right)}$. We define $S:\{0,1,\cdots,N\}\to{\R}^n$ by  \beqq S(t)=\Phi(t)u.\eeqq
It follows that a function $x\in Ker(\mathcal{L})$  if and only if $x(\cdot)=S(\cdot)\alpha$ for some $\alpha\in {\R}$.

Using the fact that $\ds{Ker\left(\sum_{i=0}^NB_i\Phi(i)\right)}$ and $\ds{Ker\left(\left(\sum_{i=0}^NB_i\Phi(i)\right)^T\right)}$ have the same dimension, we may also pick a vector $w\in {\R}^n$  which forms a basis for $\ds{Ker\left(\left(\sum_{i=0}^NB_i\Phi(i)\right)^T\right)}.$ We introduce the following notation which simplifies our characterization of $Im(\mathcal{L})$. We define $\ds{\Psi^T:\{0,1,2,\cdots,N-1\}\to {\R}^n}$ by 
\beqq\ds{\Psi^T(t)=\sum_{i=t+1}^Nw^TB_i\Phi(i)\Phi^{-1}(t+1)}.\eeqq

 \noindent We now have the following characterization of the $Im(\mathcal{L})$.
\begin{proposition}\label{C:CharImL}An element $h\in Z$  is contained in the $Im(\mathcal{L})$ if and only if $\ds{\sum_{i=0}^{N-1}\Psi^{T}(i)h(i)=0}$.
\end{proposition}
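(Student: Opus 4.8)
The plan is to obtain this characterization directly from Proposition \ref{P:Perp} by expressing its orthogonality condition through the basis vector $w$ and then rearranging the resulting double sum into the single sum against $\Psi^T$. Set $M = \sum_{i=0}^N B_i \Phi(i)$. Since $M$ is square and $\dim Ker(M) = 1$ by Proposition \ref{P:KerL}, we have $\dim Ker(M^T) = 1$ as well, so the vector $w$ chosen above spans $Ker(M^T)$; hence a vector $v \in \R^n$ lies in $Ker(M^T)^\perp$ precisely when $w^T v = 0$. Noting that the $k$-th summand of the expression displayed in Proposition \ref{P:Perp} is $B_k \Phi(k) \sum_{i=0}^{k-1} \Phi^{-1}(i+1) h(i)$ for $k = 1, \ldots, N$ (the $k=0$ term being absent because its inner sum is empty), the membership $h \in Im(\mathcal{L})$ becomes equivalent to
\[
w^T \left[ \sum_{k=1}^N B_k \Phi(k) \sum_{i=0}^{k-1} \Phi^{-1}(i+1) h(i) \right] = 0.
\]

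The heart of the argument is then an interchange in the order of summation. Distributing $w^T$ through and viewing the double sum as running over all pairs $(k,i)$ with $1 \le k \le N$ and $0 \le i \le k-1$, I would reindex by fixing $i$ first: the constraint $i < k \le N$ means that for each $i \in \{0, 1, \ldots, N-1\}$ the index $k$ ranges from $i+1$ to $N$. This gives
\[
\sum_{k=1}^N \sum_{i=0}^{k-1} w^T B_k \Phi(k) \Phi^{-1}(i+1) h(i) = \sum_{i=0}^{N-1} \left( \sum_{k=i+1}^N w^T B_k \Phi(k) \Phi^{-1}(i+1) \right) h(i).
\]
The bracketed inner sum is, after renaming the index, exactly $\Psi^T(i)$ from its definition, so the right-hand side is $\sum_{i=0}^{N-1} \Psi^T(i) h(i)$. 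Therefore the condition of Proposition \ref{P:Perp} holds if and only if $\sum_{i=0}^{N-1} \Psi^T(i) h(i) = 0$, which is the assertion.

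I expect the only delicate point to be the bookkeeping in the reindexing step and the precise matching of the inner sum with $\Psi^T(t) = \sum_{i=t+1}^N w^T B_i \Phi(i) \Phi^{-1}(t+1)$; everything else is substitution. In particular I would verify that the variation of parameters formula (\ref{E:Var}) contributes $h(i)$ at the boundary node $k$ only for $i \le k-1$, since this is what produces the triangular summation region and, after the swap, the lower summation limit $i+1$ that appears in $\Psi^T$.
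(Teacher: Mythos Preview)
Your argument is correct and is exactly the derivation the paper leaves implicit: the paper states this proposition without proof, since it follows immediately from Proposition~\ref{P:Perp} and the definition of $\Psi^T$ by the very reindexing you carry out. There is nothing to add.
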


Having characterized the kernel and image of $\mathcal{L}$, we are now  in a position to construct the projections which will form the basis of the Lyapunov-Schmidt projection scheme.  In this regard, we choose to follow \cite{MaroncelliDiscreteBounded,RodTaylordiscrete}.
The proofs that the following operators, $P$ and $I-Q$, are projections onto the kernel and image of $\mathcal{L}$, respectively, are simple consequences of our previous characterization of these spaces.  Proofs may be found in \cite{RodTaylordiscrete}.
\begin{definition} Let $V\colon{\R}^n\rightarrow{\R}^n$ be the orthogonal projection onto $\ds{Ker\left(\sum_{i=0}^NB_i\Phi(i)\right)}$. 
Define $P\colon X\rightarrow X$  by
\begin{equation*}
[Px](t)=\Phi(t)Vx(0).
\end{equation*}
Then $P$ is a projection onto $Ker(\mathcal{L})$.
\end{definition}

 \begin{definition} \label{D:Proj}Define $Q: Z\rightarrow Z$ by 
 \beqq
[Qh](t)=\Psi(t)\left(\sum_{j=0}^{N-1}|\Psi(j)|^2\right)^{-1}\sum_{i=0}^{N-1}\Psi^T(i)h(i).
\eeqq
Then $I-Q$ is a projection onto $Im(\mathcal{L})$.
 \end{definition}
\begin{remark}That $Q$ is well-defined is a consequence of Remark \ref{R:Bound}, see \cite{RodTaylordiscrete}.\end{remark}
 

 The following is the formulation of the alternative problem which we will use to analyze the nonlinear boundary value problem, \eqref{E:Scalar}, subject to boundary conditions, \eqref{E:SBoundary}. It is often referred to as the Lyapunov-Schmidt projection scheme. This type of projection scheme has become quite standard in resonant boundary value problems, we include the proof simply for the convenience of the reader.
 \begin{proposition}
Solving  $\mathcal{L}x=\mathcal{F}(x)$ is equivalent to solving the following system
\begin{equation*}
 \left\{\begin{array}{c}
 x-Px=M_p(I-Q)\mathcal{F}(x) \\\\
\text{and}\\
\ds \sum_{i=0}^{N-1}[\Psi(i)]_ng(i,[x(i)]_m)=0
 \end{array}
 \right.,
 \end{equation*}
where $M_p$ is $\ds{\left(\mathcal{L}_{|Ker(P)}\right)^{-1}}$ and $[e]_k$ denotes the $kth$ row of a vector $e$ in $\R^n$. \label{P:Lyap}
\end{proposition}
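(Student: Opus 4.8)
The plan is to decompose the single equation $\mathcal{L}x=\mathcal{F}(x)$ in $Z$ by applying the complementary projections $Q$ and $I-Q$, and to decompose $x$ in $X$ through $P$. Since $I-Q$ is a projection onto $Im(\mathcal{L})$, the operator $Q$ annihilates $\mathcal{L}x$; hence applying $Q$ to both sides of $\mathcal{L}x=\mathcal{F}(x)$ shows that the equation forces the solvability (bifurcation) condition $Q\mathcal{F}(x)=0$. Applying $I-Q$ and using $\mathcal{L}x\in Im(\mathcal{L})=Im(I-Q)$, so that $(I-Q)\mathcal{L}x=\mathcal{L}x$, yields the auxiliary equation $\mathcal{L}x=(I-Q)\mathcal{F}(x)$. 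Thus the original equation is equivalent to the pair consisting of $Q\mathcal{F}(x)=0$ and $\mathcal{L}x=(I-Q)\mathcal{F}(x)$, and the two remaining steps are to rewrite each of these in the stated form.

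For the auxiliary equation I would write $x=Px+(x-Px)$, note that $\mathcal{L}Px=0$ so $\mathcal{L}x=\mathcal{L}(x-Px)$, and observe that $x-Px\in Ker(P)$ while $(I-Q)\mathcal{F}(x)\in Im(\mathcal{L})$. Since $M_p=(\mathcal{L}_{|Ker(P)})^{-1}$ is the inverse of the bijection $\mathcal{L}\colon Ker(P)\to Im(\mathcal{L})$, applying $M_p$ to $\mathcal{L}(x-Px)=(I-Q)\mathcal{F}(x)$ gives exactly $x-Px=M_p(I-Q)\mathcal{F}(x)$, and conversely applying $\mathcal{L}$ recovers the auxiliary equation. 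This is the routine part of the argument.

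For the bifurcation condition I would unwind the definition of $Q$ in Definition \ref{D:Proj}. Because $\Psi$ is not identically zero and the scalar $\left(\sum_{j=0}^{N-1}|\Psi(j)|^2\right)^{-1}$ is nonzero, $Q\mathcal{F}(x)=0$ holds if and only if the scalar quantity $\sum_{i=0}^{N-1}\Psi^{T}(i)\mathcal{F}(x)(i)$ vanishes, which is precisely the image characterization of Proposition \ref{C:CharImL}. The key computation is then to insert the explicit form of $f$: since $\mathcal{F}(x)(i)=f(i,x(i))$ has all components equal to zero except the last, which equals $g(i,[x(i)]_m)$, the product $\Psi^{T}(i)\mathcal{F}(x)(i)$ collapses to $[\Psi(i)]_n\,g(i,[x(i)]_m)$. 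Summing over $i$ produces the stated equation $\sum_{i=0}^{N-1}[\Psi(i)]_n g(i,[x(i)]_m)=0$.

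Once the projection framework of Section 2 is in place, the proof is essentially bookkeeping, and I expect no genuine obstacle. The only point requiring care is verifying the equivalence in the reverse direction: one must check that the two displayed equations together imply $\mathcal{L}x=\mathcal{F}(x)$. This follows by applying $\mathcal{L}$ to the first equation and using $\mathcal{L}M_p=I$ on $Im(\mathcal{L})$ to obtain $\mathcal{L}x=(I-Q)\mathcal{F}(x)$, and then invoking the second equation in the equivalent form $Q\mathcal{F}(x)=0$ to replace $(I-Q)\mathcal{F}(x)$ by $\mathcal{F}(x)$.
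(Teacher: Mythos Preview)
Your proposal is correct and follows essentially the same route as the paper: the paper presents the argument as a chain of equivalences obtained by applying the complementary projections $Q$ and $I-Q$, then using $M_p\mathcal{L}=(I-P)$ and the explicit form of $f$ to reduce $Q\mathcal{F}(x)=0$ to the scalar sum, exactly as you outline. Your discussion of the reverse implication is slightly more explicit than the paper's, but the content is the same.
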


\begin{proof}
\begin{equation*}
\begin{split}
 \mathcal{L}x=\mathcal{F}(x)&\Longleftrightarrow \left\{\begin{array}{c} (I-Q)(\mathcal{L}x-\mathcal{F}(x))=0 \\ \text{ and }\\Q(\mathcal{L}x-\mathcal{F}(x))=0 \end{array} \right. \\
 & \Longleftrightarrow \left\{ \begin{array}{c} \mathcal{L}x-(I-Q)\mathcal{F}(x)=0 \\ \text{ and }\\Q\mathcal{F}(x)=0 \end{array} \right.\\
 &\Longleftrightarrow \left\{\begin{array}{c} M_p\mathcal{L}x-M_p(I-Q)\mathcal{F}(x)=0  \\ \text{ and }\\ Q\mathcal{F}(x)=0\end{array} \right.\\ 
 &\Longleftrightarrow\left\{\begin{array}{c}  {(I-P)x-M_p(I-Q)\mathcal{F}(x)=0}\\\text{ and }\\ \ds{\sum_{i=0}^{N-1}\Psi^{T}(i)f(i,x(i))=0}\end{array} \right.\\
&\Longleftrightarrow\left\{\begin{array}{c}  {(I-P)x-M_p(I-Q)\mathcal{F}(x)=0}\\\text{ and }\\ \ds{\sum_{i=0}^{N-1}[\Psi(i)]_ng(i,[x(i)]_m)=0}\end{array} \right..
 \end{split}
 \end{equation*}
\end{proof}

\section{Main Results}
We now come to our main result.  We start by introducing some notation that will be useful in what follows. We introduce the following sets:
\begin{equation*}
\mathcal{O}_{++}=\{t\in\{0,1,\cdots, N-1\}\mid [\Psi(t)]_n>0, [S(t)]_m>0\},
\end{equation*}
\begin{equation*}
\mathcal{O}_{+-}=\{t\in\{0,1,\cdots, N-1\}\mid [\Psi(t)]_n>0, [S(t)]_m<0\},
\end{equation*}
\begin{equation*}
\mathcal{O}_{-+}=\{t\in\{0,1,\cdots, N-1\}\mid [\Psi(t)]_n<0, [S(t)]_m>0\},
\end{equation*}
\begin{equation*}
\mathcal{O}_{--}=\{t\in\{0,1,\cdots, N-1\}\mid [\Psi(t)]_n>0, [S(t)]_m<0\},
\end{equation*}
\begin{equation*}
\mathcal{O}_{0}=\{t\in\{0,1,\cdots, N-1\}\mid [\Psi(t)]_n\neq 0, [S(t)]_m=0\}, 
\end{equation*}
and 
\begin{equation*}
\mathcal{O}=\mathcal{O}_{++}\cup\mathcal{O}_{+-}\cup \mathcal{O}_{-+}\cup\mathcal{O}_{--}. 
\end{equation*}
We also define  $A:=\norm{M_p(I-Q)}$ (Operator norm), $\ds s_{\max}:=\max_{t\in\{0,\cdots, N-1\}}|[S(t)]_m|$, $\ds s_{\min}:=\min_\mathcal{O}|[S(t)]_m|$, $\norm{g}_r=\sup_{t\in\{0,\cdots, N-1\}, x\in[-r,r]}|g(t,x)|$, and $p:\R\times Im(I-P)\to Im(I-P)$ by 
\beqq
p(\alpha,v)=M_p(I-Q)\mathcal{F}(\alpha S(\cdot)+v).
\eeqq
 \begin{theorem}\label{T:Main} Suppose the following conditions hold:
 
  \vspace{.25cm}
 
 \begin{enumerate}
 
 \item[C1.] $\mathcal{O}_0$ is empty
 \item[C2.] There exists positive real numbers $c$ and $d$, with $c<d$,  and functions $W_1, U_1,  W_2, U_2, w_1 , u_1, w_2$ and $u_2$ such that 
 \begin{equation*}
 \begin{split}
 &\text{if } x\in[c,d], \text{ then }W_1(t)< g(t,x) \text{ for }\ts\ts t\in \mathcal{O}_{++}\\
 &\text{if } x\in[-d,-c], \text{ then } g(t,x)< U_1(t) \text{ for } \ts\ts t\in \mathcal{O}_{++}\\
 &\text{if } x\in[c,d], \text{ then }  g(t,x)< u_1(t) \text{ for } \ts\ts t\in \mathcal{O}_{+-}\\
 &\text{if } x\in[-d,-c], \text{ then } w_1(t)< g(t,x) \text{ for } \ts\ts t\in \mathcal{O}_{+-}\\
 &\text{if } x\in[c,d], \text{ then } g(t,x)< W_2(t) \text{ for } \ts\ts t\in \mathcal{O}_{-+}\\
 &\text{if } x\in[-d,-c], \text{ then } U_2(t)< g(t,x) \text{ for } \ts\ts t\in \mathcal{O}_{-+}\\
 &\text{if } x\in[c,d], \text{ then } u_2(t)< g(t,x) \text{ for } \ts\ts t\in \mathcal{O}_{--}\\
  and\\
 &\text{if } x\in[-d,-c], \text{ then } g(t,x)< w_2(t) \text{ for } \ts\ts t\in \mathcal{O}_{--}\\
 \end{split}
 \end{equation*}
 \vspace{.25cm}
\item[C3.]$\ds d>{cs_{\max}+A\norm{g}_d(s_{\max}+s_{\min})\over s_{\min}}$
\vspace{.25cm}
\item[C4.] $J_2\leq0\leq J_1$, where 
\begin{equation*}
J_1=\sum_{i=0}^{N-1}[\Psi(i)]_nK_1(i),
\end{equation*}
\begin{equation*}
J_2=\sum_{i=0}^{N-1}[\Psi(i)]_nK_2(i),
\end{equation*}
and $K_1$ and $K_2$ are defined by
\begin{equation*}
K_1(t)=\begin{cases}
W_1(t)& t\in \mathcal{O}_{++}\\
w_1(t)& t\in \mathcal{O}_{+-}\\
W_2(t)& t\in \mathcal{O}_{-+}\\
w_2(t)& t\in \mathcal{O}_{--}\\
\end{cases},
\end{equation*}
and
\begin{equation*}
K_2(t)=\begin{cases}
U_1(t)& t\in \mathcal{O}_{++}\\
u_1(t)& t\in \mathcal{O}_{+-}\\
U_2(t)& t\in \mathcal{O}_{-+}\\
u_2(t)& t\in \mathcal{O}_{--}\\
\end{cases}.
\end{equation*}
\vspace{.25cm}
 \hspace{-1.75cm}Then there exists a solution to the nonlinear boundary value problem \eqref{E:Scalar}-\eqref{E:SBoundary}.
\end{enumerate}
 \end{theorem}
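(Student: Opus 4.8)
The plan is to work entirely inside the finite-dimensional space $X$ and to use Proposition \ref{P:Lyap} to replace $\mathcal{L}x=\mathcal{F}(x)$ by the equivalent coupled system. Writing $x=\alpha S(\cdot)+v$ with $\alpha\in\R$ and $v\in Im(I-P)$ (so that $Px=\alpha S(\cdot)$, since $S(\cdot)\in Ker(\mathcal{L})$, and $(I-P)x=v$), the system becomes the fixed-point equation $v=p(\alpha,v)$ together with the scalar bifurcation equation $\sum_{i=0}^{N-1}[\Psi(i)]_n g(i,[\alpha S(i)+v(i)]_m)=0$. Because $X$ is finite-dimensional, I would solve this by a single Brouwer-degree argument on a box $\Omega=(-\alpha^*,\alpha^*)\times B_R\subset\R\times Im(I-P)$, with $B_R$ the open ball of radius $R$ about the origin in $Im(I-P)$, rather than solving the auxiliary equation first: the fixed point of $v=p(\alpha,v)$ need not be unique, so there is no continuous branch $\alpha\mapsto v(\alpha)$ available for a naive intermediate-value argument.

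First I would fix the geometry using C3. Choosing $R$ slightly larger than $A\norm{g}_d$, the strict inequality in C3 guarantees that the interval $\left[\tfrac{c+R}{s_{\min}},\tfrac{d-R}{s_{\max}}\right]$ is nonempty; pick $\alpha^*$ in its interior. These choices secure two facts at once: $\alpha^*s_{\max}+R\le d$, so that on $\overline\Omega$ the argument $[\alpha S(i)+v(i)]_m$ always lies in $[-d,d]$ and hence $\norm{\mathcal{F}(\alpha S+v)}\le\norm{g}_d$; and $\alpha^*s_{\min}-R\ge c$, so that at $\alpha=\pm\alpha^*$ and for every $t\in\mathcal{O}$ the value $[\alpha S(t)+v(t)]_m$ lands in $[c,d]$ when $[S(t)]_m>0$ and in $[-d,-c]$ when $[S(t)]_m<0$. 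This is the essential role of C3.

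Next I would set up the homotopy $\mathcal{H}:[0,1]\times\overline\Omega\to\R\times Im(I-P)$,
\[
\mathcal{H}_\lambda(\alpha,v)=\Big(\textstyle\sum_{i=0}^{N-1}[\Psi(i)]_n\,g(i,[\alpha S(i)+\lambda v(i)]_m),\; v-\lambda\,M_p(I-Q)\mathcal{F}(\alpha S(\cdot)+v)\Big),
\]
and verify admissibility (no zeros on $\partial\Omega$ for any $\lambda$). On the face $\norm{v}=R$ the second coordinate cannot vanish, since $\norm{\lambda M_p(I-Q)\mathcal{F}(\alpha S+v)}\le A\norm{g}_d<R=\norm{v}$. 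On the faces $\alpha=\pm\alpha^*$ the first coordinate cannot vanish: by C1 the sum effectively runs only over $\mathcal{O}$, and on each of $\mathcal{O}_{++},\mathcal{O}_{+-},\mathcal{O}_{-+},\mathcal{O}_{--}$ the matching inequality of C2, combined with the sign of $[\Psi(t)]_n$, gives termwise $[\Psi(t)]_n g>[\Psi(t)]_n K_1(t)$ at $\alpha=\alpha^*$ and $[\Psi(t)]_n g<[\Psi(t)]_n K_2(t)$ at $\alpha=-\alpha^*$; summing and invoking C4 yields first coordinate $>J_1\ge0$ and $<J_2\le0$, respectively. (Here one reads $\mathcal{O}_{--}=\{t:[\Psi(t)]_n<0,\ [S(t)]_m<0\}$.) This sign bookkeeping is the technical heart of the argument.

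Finally, by homotopy invariance $\deg(\mathcal{H}_1,\Omega,0)=\deg(\mathcal{H}_0,\Omega,0)$. The map $\mathcal{H}_0(\alpha,v)=(\gamma(\alpha),v)$, with $\gamma(\alpha)=\sum_i[\Psi(i)]_n g(i,\alpha[S(i)]_m)$, has product form, so its degree equals $\deg(\gamma,(-\alpha^*,\alpha^*),0)$; since $\gamma(-\alpha^*)<0<\gamma(\alpha^*)$ by the same sign analysis, this degree is $+1$. Hence $\deg(\mathcal{H}_1,\Omega,0)=1\ne0$, so $\mathcal{H}_1$ has a zero $(\bar\alpha,\bar v)\in\Omega$. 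For $x=\bar\alpha S(\cdot)+\bar v$ this zero satisfies both $x-Px=M_p(I-Q)\mathcal{F}(x)$ and the bifurcation equation, so by Proposition \ref{P:Lyap} it solves $\mathcal{L}x=\mathcal{F}(x)$, and the equivalent boundary value problem \eqref{E:Scalar}-\eqref{E:SBoundary} has a solution. The main obstacle I expect is precisely the admissibility check on the $\alpha$-faces: C3 must keep all arguments inside the C2-bands uniformly in $\lambda$, and the eight C2 inequalities must be correctly paired with the four regions so that C4 pins down the sign of the bifurcation sum.
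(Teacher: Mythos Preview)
Your proposal is correct and follows essentially the same Lyapunov--Schmidt/Brouwer-degree strategy as the paper: decompose $x=\alpha S(\cdot)+v$, work on a box $\Omega\subset\R\times Im(I-P)$, use C3 to confine the arguments of $g$ to $[-d,d]$ on $\overline\Omega$ and to $[c,d]\cup[-d,-c]$ on the $\alpha$-faces, and use C1, C2, C4 to fix the sign of the bifurcation sum there. The only cosmetic differences are that the paper inserts $p(\alpha,v)$ rather than $v$ into the first coordinate (equivalent at zeros since $v=p(\alpha,v)$ there) and chooses a different homotopy---the convex combination $(1-\gamma)\alpha+\gamma\cdot(\text{bifurcation sum})$ in the first slot, homotoping $H$ to the identity---whereas you scale the $v$-contribution inside $g$, homotoping $\mathcal{H}_1$ to the product map $(\gamma(\alpha),v)$ and reading off degree $+1$ from the one-dimensional factor.
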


\begin{proof}

Define $H:\R\times Im(I-P)\to \R\times Im(I-P)$ by 
\begin{equation}
H(\alpha,x)=\begin{pmatrix}
\ds\sum_{i=0}^{N-1}[\Psi(i)]_ng(i,\alpha[S(i)]_m+[p(\alpha,v)(i)]_m)
\\
v-p(\alpha,v)
\end{pmatrix}.
\end{equation}
From Proposition \ref{P:Lyap}, the zeros of $H$ are precisely the solutions of (\ref{E:Scalar})-(\ref{E:SBoundary}). We will show the existence of a solution to the nonlinear boundary value problem by showing that  the Brouwer degree of $H$, $deg(H, \Omega, 0)$, is nonzero for some appropriately chosen set $\Omega$.

To this end, endow $\R\times Im(I-P)$ with the product topology and define 
\beqq
\Omega=\{(\alpha, v)\mid |\alpha|\leq \alpha^* \text{ and } \norm{v}\leq r^*\},
\eeqq
 where $\ds\alpha^*={c+A\norm{g}_d\over s_{\min}}$ and $\ds r^*=A \norm{g}_d$.  
 
Define $Q:[0,1]\times \overline{\Omega}\to  \R\times Im(I-P)$ by 
\begin{equation*}
Q(\gamma,(\alpha,v))=\begin{pmatrix}
\ds{(1-\gamma)\alpha+\gamma\sum_{i=0}^{N-1}[\Psi(i)]_ng(i,\alpha [S(i)]_m+[p(\alpha,v)(i)]_m)}
\\
v-\gamma p(\alpha,v)
\end{pmatrix}.
\end{equation*}
It is evident that $Q$ is a homotopy between the identity mapping and $H$. In what follows, we will show that  $Q(\gamma, (\alpha,v))$ is nonzero for each $\gamma\in(0,1)$ and every $(\alpha,v)$ in $\partial(\Omega)=\{(\alpha,v)\mid |\alpha|=\alpha^* \text{ and } \norm{v}\leq r^* \text{ or } |\alpha|\leq \alpha^* \text{ and } \norm{v}= r^* \}$, so that, by the invariance of the Brouwer degree under homotopy, $deg(H,\Omega,0)=deg(I,\Omega, 0)=1$.

In what follows, it will be useful to note that
\begin{equation}
\begin{split}\label{E:alp1}
\alpha^*s_{\max}+r^*&=\left({c+A\norm{g}_d\over s_{\min}}\right)s_{\max}+ A \norm{g}_d\\
&={cs_{\max}+A\norm{g}_d s_{\max}+A\norm{g}_d s_{\min}\over s_{\min}}\\
&<d.
\end{split}
\end{equation}

We now turn our attention to showing that $Q(\gamma,(\alpha,v))\neq 0$ for each $\gamma\in(0,1)$ and every $(\alpha, v)\in\partial(\Omega)$. We start by assuming $(\alpha, v)\in \partial(\Omega)$, with $|\alpha|\leq \alpha^*$ and $\norm{v}=r^*$.

Since for every $i$,  $|\alpha [S(i)]_m+[v(i)]_m|\leq \alpha^*s_{\max}+r^*$, we have, using \eqref{E:alp1},  that $\alpha [S(i)]_m+[v(i)]_m\in [-d,d]$. It follows that
\begin{equation*}
\begin{split}\label{L:1Res}
\norm{p(\alpha,v)}&=\norm{M_p(I-Q)\mathcal{F}(\alpha S(\cdot)+v)}\\
		       &\leq\norm{M_p(I-Q)}\norm{\mathcal{F}(\alpha S(\cdot)+v)}\\
		       &=A\max_{\{0,\cdots, N-1\}}|f(i,\alpha S(i)+v(i))|\\
		       &=A\max_{\{0,\cdots, N-1\}}|g(i,\alpha[S(i)]_m+[v(i)]_m)|\\
		       &\leq A \norm{g}_d\\
		       &=r^*.
\end{split}
\end{equation*}

Thus, $\norm{p(\alpha,v)}\leq r^*=\norm{v}$ and it becomes clear that $Q(\gamma,(\alpha,v))\neq0$ for every $\gamma$ in (0,1), since $v-\gamma p(\alpha,v)\neq 0$.

We finish the proof by looking at the case when $(\alpha,v)\in\partial(\Omega)$ with $|\alpha|=\alpha^*$ and $\norm{v}\leq r^*$. Combining the fact that  $\norm{p(\alpha,v)}\leq r^*$ with \eqref{E:alp1}, we conclude that for each $i$ and for every $(\alpha,v)\in \partial(\Omega)$, $|\alpha [S(i)]_m+[p(\alpha, v)(i)]_m|\leq d$.

  Further, if $|\alpha|=\alpha^*$, then for all $i \in \mathcal{O}$, we have 
\begin{equation*}
\begin{split}
|\alpha [S(i)]_m+[p(\alpha,v)(i)]_m|&\geq \alpha^*s_{\min}-\norm{p(\alpha,v)}\\
&\geq \alpha^*s_{\min}-A\norm{g}_d\\
&=\left({c+A\norm{g}_d\over s_{\min}}\right)s_{\min}-A \norm{g}_d\\
&=c.
\end{split}
\end{equation*}
Thus, we have shown that when $(\alpha,v)\in\partial(\Omega)$ with $|\alpha|=\alpha^*$ and $\norm{v}\leq r^*$, then for all $i \in \mathcal{O}$, $|\alpha [S(i)]_m+[p(\alpha,v)(i)]_m|\in[c,d]$. 

In fact, we have shown that if $\alpha=\alpha^*$ and $i\in \mathcal{O}_{++}\cup \mathcal{O}_{-+}$, then 
$\alpha [S(i)]_m+[p(\alpha,v)(i)]_m\in[c,d]$ and if $i\in \mathcal{O}_{+-}\cup \mathcal{O}_{--}$, then $\alpha [S(i)]_m+[p(\alpha,v)(i)]_m\in[-d,-c]$. Similarly, if $\alpha=-\alpha^*$ and $i\in\mathcal{O}_{++}\cup \mathcal{O}_{-+}$, then 
$\alpha [S(i)]_m+[p(\alpha,v)(i)]_m\in[-d,-c]$ and if $i\in \mathcal{O}_{+-}\cup \mathcal{O}_{--}$, then $\alpha [S(i)]_m+[p(\alpha,v)(i)]_m\in[c,d]$. 

Using C2., we now conclude that when $\alpha=\alpha^*$,  \begin{equation*}
\begin{split}
&W_1(i)< g(i,\alpha [S(i)]_m+[p(\alpha,v)(i)]_m) \text{ for } \ts\ts i\in \mathcal{O}_{++}\\
&w_1(i)< g(i,\alpha [S(i)]_m+[p(\alpha,v)(i)]_m) \text{ for } \ts\ts i\in \mathcal{O}_{+-}\\
&g(i,\alpha [S(i)]_m+[p(\alpha,v)(i)]_m)<W_2(i) \text{ for } \ts\ts i\in \mathcal{O}_{-+}\\
\end{split}
\end{equation*}
and
\begin{equation*}
g(i,\alpha [S(i)]_m+[p(\alpha,v)(i)]_m)< w_2(i) \text{ for } \ts\ts i\in \mathcal{O}_{--}.
\end{equation*}

Thus, since $\mathcal{O}_0$ is empty,  
\beqq
\begin{split}
\sum_{i=0}^{N-1}[\Psi(i)]_ng(i,\alpha [S(i)]_m+[p(\alpha,v)(i)]_m)&=\sum_{i\in\mathcal{O}}[\Psi(i)]_ng(i,\alpha [S(i)]_m+[p(\alpha,v)(i)]_m)\\
&>\sum_\mathcal{O}[\Psi(i)]_nK_1(i)\\
&=\sum_{i=0}^{N-1}[\Psi(i)]_nK_1(i)\\
&=J_1\\
&\geq 0.
\end{split}
\eeqq

Similarly, we may conclude that if $\alpha=-\alpha^*$,
\beqq
\begin{split}
\sum_{i=0}^{N-1}[\Psi(i)]_ng(i,\alpha [S(i)]_m+[p(\alpha,v)(i)]_m)&=\sum_{i\in\mathcal{O}}[\Psi(i)]_ng(i,\alpha [S(i)]_m+[p(\alpha,v)(i)]_m)\\
&<\sum_\mathcal{O}[\Psi(i)]_nK_2(i)\\
&=\sum_{i=0}^{N-1}[\Psi(i)]_nK_2(i)\\
&=J_2\\
&\leq 0.
\end{split}
\eeqq

It follows that $\alpha\cdot\sum_{i=0}^{N-1}[\Psi(i)]_ng(i,\alpha [S(i)]_m+[p(\alpha,v)(i)]_m)>0$.  Since 
\beqq
\ds{(1-\gamma)\alpha+\gamma\sum_{i=0}^{N-1}[\Psi(i)]_ng(i,\alpha [S(i)]_m+[p(\alpha,v)(i)]_m)}
\eeqq
would be $0$ for some $\gamma\in(0,1)$ if and only if
\beqq\alpha\cdot\sum_{i=0}^{N-1}[\Psi(i)]_ng(i,\alpha [S(i)]_m+[p(\alpha,v)(i)]_m)<0,
\eeqq
 we have that $Q(\gamma,(\alpha,v))$ is nonzero in these cases. 
We now conclude, by the homotopy invariance of the Brouwer degree, that \begin{equation*}deg(H,\Omega,0)=deg(I,\Omega,0)=1.\end{equation*}
The result now follows.
\end{proof}


\begin{remark}\label{R:Reverse}
 If the inequalities of Theorem \ref{T:Main} are reversed; that is, 
 
 \begin{equation*}
 \begin{split}
 &\text{if } x\in[c,d], \text{ then }W_1(t)>g(t,x) \text{ for }\ts\ts t\in \mathcal{O}_{++}\\
 &\text{if } x\in[-d,-c], \text{ then } g(t,x)> U_1(t) \text{ for } \ts\ts t\in \mathcal{O}_{++}\\
 &\text{if } x\in[c,d], \text{ then }  g(t,x)>u_1(t) \text{ for } \ts\ts t\in \mathcal{O}_{+-}\\
 &\text{if } x\in[-d,-c], \text{ then } w_1(t)> g(t,x) \text{ for } \ts\ts t\in \mathcal{O}_{+-}\\
 &\text{if } x\in[c,d], \text{ then } g(t,x)> W_2(t) \text{ for } \ts\ts t\in \mathcal{O}_{-+}\\
 &\text{if } x\in[-d,-c], \text{ then } U_2(t)> g(t,x) \text{ for } \ts\ts t\in \mathcal{O}_{-+}\\
 &\text{if } x\in[c,d], \text{ then } u_2(t)> g(t,x) \text{ for } \ts\ts t\in \mathcal{O}_{--}\\
  and\\
 &\text{if } x\in[-d,-c], \text{ then } g(t,x)> w_2(t) \text{ for } \ts\ts t\in \mathcal{O}_{--}\\
 \end{split},
 \end{equation*}
  then provided $J_1\leq0\leq J_2$, (\ref{E:Scalar})-(\ref{E:SBoundary}) has a solution. The proof is essentially the same.
 \end{remark}
 The following corollary isolates the special case in which $[\Psi(i)]_n$ and $[S(i)]_m$ have the same sign for all $i=0,\cdots, N-1$.  This case is of special interest since it occurs in all `self-adjoint'  boundary value problems, Sturm-Liouville boundary value problems being a special case, specific cases of second-order periodic difference equations being another. It also happens in several other cases, as we will see in our example in section 5.

  
    \begin{corollary}\label{C:Sign}
 Suppose the following conditions are satisfied:
 \begin{enumerate}
 \item[C1*.] $\mathcal{O}_0$ is empty.
 \item[C2*.] $[\Psi(i)]_n\cdot [S(i)]_m\geq 0$ (or $[\Psi(i)]_n\cdot [S(i)]_m\leq 0$) for all $i=0,\cdots, N-1$.
 \item[C3*.] There exists positive real numbers $c$ and $d$, with $c<d$, such that $g(t,x)>0$ (or $g(t,x)<0$) for every $x\in [c,d]$ and each $t=0,\cdots, N-1$ and $g(t,x)<0$ (or $g(t,x)>0$) for every $x\in [-d,-c]$ and each $t=0,\cdots, N-1$.
 \item[C4*.]$\ds d>{cs_{\max}+A\norm{g}_d(s_{\max}+s_{\min})\over s_{\min}}$ 
 \end{enumerate}
 Then the nonlinear boundary value problem (\ref{E:Scalar})-(\ref{E:SBoundary}) has a solution.
 \end{corollary}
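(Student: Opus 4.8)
The plan is to deduce Corollary \ref{C:Sign} directly from Theorem \ref{T:Main} and its reversed counterpart, Remark \ref{R:Reverse}, by taking every threshold function $W_1, U_1, W_2, U_2, w_1, u_1, w_2, u_2$ to be identically zero. The guiding observation is that the sign hypothesis C2* collapses the four index sets so that only two of them survive, and on those surviving sets the strict positivity/negativity supplied by C3* is exactly what is needed to beat the zero function in the inequalities of C2.

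First I would unwind the sign structure. Since $\mathcal{O}=\mathcal{O}_{++}\cup\mathcal{O}_{+-}\cup\mathcal{O}_{-+}\cup\mathcal{O}_{--}$ consists precisely of those $t$ with both $[\Psi(t)]_n\neq 0$ and $[S(t)]_m\neq 0$, on $\mathcal{O}$ the product $[\Psi(t)]_n\cdot[S(t)]_m$ is nonzero. Under the first alternative of C2* (product $\geq 0$) this forces the product to be strictly positive on $\mathcal{O}$, so the two factors share a sign; hence $\mathcal{O}_{+-}=\mathcal{O}_{-+}=\emptyset$ and $\mathcal{O}=\mathcal{O}_{++}\cup\mathcal{O}_{--}$. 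Under the second alternative (product $\leq 0$) the factors have opposite signs on $\mathcal{O}$, so $\mathcal{O}_{++}=\mathcal{O}_{--}=\emptyset$ and $\mathcal{O}=\mathcal{O}_{+-}\cup\mathcal{O}_{-+}$.

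Next I would verify the hypotheses of Theorem \ref{T:Main} (or Remark \ref{R:Reverse}) with all threshold functions set to zero. Condition C1 is identical to C1*, and C3 is identical to C4*, so these transfer verbatim. For C2, the statements attached to the two empty index sets hold vacuously, while on the two nonempty sets the required strict inequalities, e.g.\ $W_1(t)<g(t,x)$ for $x\in[c,d]$ on $\mathcal{O}_{++}$, reduce to $0<g(t,x)$ or $g(t,x)<0$, which are exactly the sign statements granted by C3*. Finally, since every threshold function is zero, $K_1$ and $K_2$ vanish on $\mathcal{O}$; and for $t\notin\mathcal{O}$ emptiness of $\mathcal{O}_0$ rules out the case $[S(t)]_m=0$ with $[\Psi(t)]_n\neq 0$, leaving $[\Psi(t)]_n=0$, so every summand of $J_1$ and $J_2$ is zero. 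Thus $J_1=J_2=0$ and the requirement $J_2\leq 0\leq J_1$ of C4 holds with equality.

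The only genuine bookkeeping, and the step most prone to error, is matching each of the four combinations of the two ``or'' choices in C2* and C3* to the correct ambient result. When the chosen sign pattern orients $g$ so that positivity on $[c,d]$ is compatible with the surviving index sets, the inequalities align with Theorem \ref{T:Main}; in the two complementary cases they align with the reversed inequalities of Remark \ref{R:Reverse}, where the same computation again yields $J_1=J_2=0$ and the condition $J_1\leq 0\leq J_2$. In each of the four cases the conclusion, existence of a solution to \eqref{E:Scalar}-\eqref{E:SBoundary}, then follows immediately from the cited result.
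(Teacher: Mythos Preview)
Your proposal is correct and follows essentially the same approach as the paper: set all the threshold functions $W_1,U_1,W_2,U_2,w_1,u_1,w_2,u_2$ equal to zero, observe that C2* forces two of the four index sets $\mathcal{O}_{\pm\pm}$ to be empty, verify C2 (or its reversed form) on the surviving sets via C3*, and note that $J_1=J_2=0$ so C4 holds trivially. The paper treats only one of the four sign combinations explicitly and dismisses the rest as ``similar,'' whereas you go a bit further in spelling out which combinations require Theorem~\ref{T:Main} versus Remark~\ref{R:Reverse}, but the argument is the same.
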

 \begin{proof}
 It suffices to show that conditions C2. and C4. of Theorem \ref{T:Main} hold.  We will assume $[\Psi(i)]_n\cdot [S(i)]_m\geq 0$, that $g(t,x)>0$  for every $x\in [c,d]$ and each $t=0,\cdots, N-1$, and that $g(t,x)<0$ for every $x\in [-d,-c]$ and each $t=0,\cdots, N-1$.  Since $[\Psi(i)]_n\cdot [S(i)]_m\geq 0$, we have that $\mathcal{O}_{+-}$ and $\mathcal{O}_{-+}$ are empty. It follows that condition C2. of Theorem \ref{T:Main} reduces to 
 
 (NC2.) There exists positive numbers $c$ and $d$, with $c<d$, and functions $W_1, U_1,  w_2$ and $u_2$ such that 
 \begin{equation*}
 \begin{split}
 &\text{if } x\in[c,d], \text{ then }W_1(t)< g(t,x) \text{ for } \ts\ts t\in \mathcal{O}_{++}\\
 &\text{if } x\in[-d,-c], \text{ then } g(t,x)<U_1(t) \text{ for } \ts\ts t\in \mathcal{O}_{++}\\
 &\text{if } x\in[c,d], \text{ then } u_2(t)<g(t,x) \text{ for } \ts\ts t\in \mathcal{O}_{--}\\
  and\\
 &\text{if } x\in[-d,-c], \text{ then } g(t,x)<w_2(t) \text{ for } \ts\ts t\in \mathcal{O}_{--}.\\
 \end{split}
 \end{equation*}
However, using C3*., NC2.  is clearly satisfied by taking $W_1=U_1=u_2=w_2=0$. It then trivially follows that $J_1=J_2=0$, so  that condition C4. of Theorem \ref{T:Main} is satisfied. This completes the proof for this case. The other cases are similar.
 \end{proof}
 
  
  The following corollary is an application of Theorem \ref{T:Main} to cases in which the nonlinearities satisfy a sublinear or `small' linear growth condition. 
 \begin{corollary}
 Suppose the following conditions are satisfied:
 
 \begin{enumerate}
  \item[C1**.] Conditions C1., C2., and C4. of Theorem \ref{T:Main} hold.
     \vspace{.25cm}
     
   \item[C2**.] There exist positive constants $M_1$ and $M_2$ such that $|g(t,x)|\leq M_1|x|^\beta+M_2$ for every $x\in [-d,d]$ and each $t=0, \cdots, N-1$, where $0<\beta\leq1$.
   \vspace{.25 cm}
  
  \item[C3**.] $\ds d>{cs_{\max}+(K_1(1-\beta)+K_2)(s_{\max}+s_{\min})\over s_{\min}-K_1\beta(s_{\max}+s_{\min})}$, where $K_1=A M_1$, $K_2=A M_2$, and we are assuming $s_{\min}-K_1\beta(s_{\max}+s_{\min})>0$; that is, $\ds K_1< {s_{\min}\over \beta(s_{\min}+s_{\max})}$.
  
  \vspace{.25cm}
 \end{enumerate}

\noindent Then the nonlinear boundary value problem, \eqref{E:Scalar}-\eqref{E:SBoundary}, has at least one solution. 
 \end{corollary}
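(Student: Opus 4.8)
The plan is to derive the conclusion directly from Theorem~\ref{T:Main}. By C1**, conditions C1, C2, and C4 of Theorem~\ref{T:Main} are already in force, so the only thing that must be checked is condition C3 of that theorem, namely
\[
d > \frac{cs_{\max} + A\norm{g}_d(s_{\max}+s_{\min})}{s_{\min}}.
\]
Thus the whole corollary reduces to showing that the growth hypothesis C2** together with the quantitative bound C3** implies this single inequality.

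First I would estimate $\norm{g}_d$. Since the bound in C2** is increasing in $|x|$ and holds on all of $[-d,d]$, it gives $\norm{g}_d \le M_1 d^\beta + M_2$, and hence $A\norm{g}_d \le K_1 d^\beta + K_2$ with $K_1 = AM_1$ and $K_2 = AM_2$. Consequently it suffices to prove the stronger inequality
\[
cs_{\max} + (K_1 d^\beta + K_2)(s_{\max}+s_{\min}) < d\,s_{\min},
\]
because C3 then follows immediately from $A\norm{g}_d \le K_1 d^\beta + K_2$.

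The one substantive ingredient is the elementary \emph{tangent-line} inequality $d^\beta \le 1 - \beta + \beta d$, valid for every $d > 0$ when $0 < \beta \le 1$; it expresses the fact that the concave function $t \mapsto t^\beta$ lies below its tangent line at $t = 1$ (with equality when $\beta = 1$). Writing $\sigma = s_{\max}+s_{\min}$ and clearing the denominator in C3** --- which is legitimate precisely because the standing assumption $K_1 < s_{\min}/(\beta(s_{\min}+s_{\max}))$ makes $s_{\min} - K_1\beta\sigma$ positive --- I would rewrite C3** as
\[
d\,s_{\min} > cs_{\max} + K_1(1-\beta+\beta d)\sigma + K_2\sigma.
\]
Applying $1 - \beta + \beta d \ge d^\beta$ to the middle term replaces it by $K_1 d^\beta \sigma$ and yields exactly the stronger inequality displayed above, completing the verification of C3.

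I do not anticipate any real obstacle: the argument is a short chain of estimates, and the only place demanding attention is the sign of the denominator $s_{\min} - K_1\beta\sigma$, which must be positive for the passage from C3** to its cleared form to preserve the direction of the inequality. This positivity is, however, exactly the condition recorded in C3**, so the reduction goes through and the result follows from Theorem~\ref{T:Main}.
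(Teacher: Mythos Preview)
Your proposal is correct and follows essentially the same route as the paper's own proof: both reduce the corollary to verifying C3 of Theorem~\ref{T:Main}, bound $\norm{g}_d$ by $M_1d^\beta+M_2$, clear the denominator in C3** using its positivity, and then invoke the concavity inequality $d^\beta\le 1-\beta+\beta d$ (the paper writes it as $1+\beta(d-1)\ge (1+(d-1))^\beta$). Your presentation is slightly more streamlined, but the argument is the same.
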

 
 \begin{proof}
 
 From (C2**.), we get $\norm{g}_d\leq M_1d^\beta+M_2$.  Thus,
\beqq
 \begin{split}
  {cs_{\max}+A\norm{g}_d(s_{\max}+s_{\min})\over s_{\min}}&\leq {cs_{\max}+A(M_1d^\beta+M_2)(s_{\max}+s_{\min})\over s_{\min}}\\
  &={cs_{\max}+(K_1d^\beta+K_2)(s_{\max}+s_{\min})\over s_{\min}}.
  \end{split}
\eeqq
Using  (C3**.),  we have $\ds{cs_{\max}+(K_1(1-\beta)+K_2)(s_{\max}+s_{\min})\over s_{\min}-K_1\beta(s_{\max}+s_{\min})}<d$, from which we conclude 
\beqq
\begin{split}
cs_{\max}+K_2(s_{\max}+s_{\min})&<ds_{\min}-dK_1\beta(s_{\max}+s_{\min})-K_1(1-\beta)(s_{\max}+s_{\min})\\
&=ds_{\min}-K_1(1+\beta(d-1))(s_{\max}+s_{\min})\\
&\leq ds_{\min}-K_1(1+(d-1))^\beta(s_{\max}+s_{\min})\\
&= ds_{\min}-K_1d^\beta(s_{\max}+s_{\min}).
\end{split}
\eeqq
 Rearranging, it follows that 
\beqq
 \begin{split}
  {cs_{\max}+A\norm{g}_d(s_{\max}+s_{\min})\over s_{\min}}&\leq{cs_{\max}+(K_1d^\beta+K_2)(s_{\max}+s_{\min})\over s_{\min}}\\
  &<d.
  \end{split}
\eeqq
\end{proof}
\begin{remark}\label{R:LL} We would like to point out that  if $g$ is sublinear on all of $\R$; that is, there exist positive numbers $M_1, M_2$ and a constant $\beta$, $0\leq\beta< 1$, such that $|g(t,x)|\leq M_1|x|^\beta+M_2$ for every $x\in \R$ and each $t=0, \cdots, N-1$, and there is a $R>0$ such that 
\begin{equation}
 \begin{split}\label{E:LL}
 &\text{if } x>R, \text{ then }W_1(t)<g(t,x) \text{ for } \ts\ts t\in \mathcal{O}_{++}\\
 &\text{if } x<-R, \text{ then } g(t,x)< U_1(t) \text{ for } \ts\ts t\in \mathcal{O}_{++}\\
 &\text{if } x>R, \text{ then }  g(t,x)< u_1(t) \text{ for } \ts\ts t\in \mathcal{O}_{+-}\\
 &\text{if } x<-R, \text{ then } w_1(t)< g(t,x) \text{ for } \ts\ts t\in \mathcal{O}_{+-}\\
 &\text{if } x>R, \text{ then } g(t,x)< W_2(t) \text{ for } \ts\ts t\in \mathcal{O}_{-+}\\
 &\text{if } x<-R, \text{ then } U_2(t)<g(t,x) \text{ for } \ts\ts t\in \mathcal{O}_{-+}\\
 &\text{if } x>R, \text{ then } u_2(t)< g(t,x) \text{ for } \ts\ts t\in \mathcal{O}_{--}\\
  and\\
 &\text{if } x<-R, \text{ then } g(t,x)< w_2(t) \text{ for } \ts\ts t\in \mathcal{O}_{--}\\
 \end{split},
 \end{equation}
 then C3.  of Theorem \ref{T:Main} holds, since $\ds\lim_{r\to\infty}{{\ds Rs_{\max}+A\norm{g}_r(s_{\max}+s_{\min})\over\ds s_{\min}}\over r}=0<1 .$  Thus, if conditions C1. and C4. of Theorem \ref{T:Main} are satisfied, then the nonlinear boundary value problem has a solution.

In fact, if $g$ has `small' linear growth; that is, $|g(t,x)|\leq M_1|x|+M_2$ for every $x\in \R$ and each $t=0, \cdots, N-1$  with \begin{equation*}A M_1\left({s_{\max}+s_{\min}\over s_{\min}}\right)<1,\end{equation*} then provided \eqref{E:LL} holds 
  we have that C3. of Theorem \ref{T:Main} holds, since in this case  $\ds\lim_{r\to\infty}{{\ds Rs_{\max}+A\norm{g}_r(s_{\max}+s_{\min})\over\ds s_{\min}}\over r}\leq A M_1\left({s_{\max}+s_{\min}\over s_{\min}}\right)<1. $ Thus, again, if  conditions C1. and C4. of Theorem \ref{T:Main} are satisfied, then the nonlinear boundary value problem has a solution.
\end{remark}


 \section{Comparision to previous results}
In this section we show how Theorem \ref{T:Main} improves upon existing results in the literature.

\subsection{General Multipoint}
  In \cite{RodTaylorscalardiscrete} the authors look at the existence of solutions to \eqref{E:Scalar}-\eqref{E:SBoundary}.  They obtain results by placing conditions on the nonlinearity, $g$, which are much more restrictive than Theorem \ref{T:Main}.
Their main result, written in terms of the notation of this paper,  is the following:

 \begin{theorem}\label{T:App}Suppose \eqref{E:Scalar} subject to boundary conditions \eqref{E:SBoundary} has a 1-dimensional solution space.  If
 
  \vspace{.25cm}
 
 \begin{enumerate}
  \item[H1.] $g$ is independent of $t$
 \vspace{.25cm}
\item[H2.] $g(\pm\infty):=\lim_{x\to\pm\infty}g(x)$ exist

\vspace{.25cm}
\item[H3.] $\mathcal{O}_0$ is empty
\vspace{.25cm}
\vspace{.25cm}
\item[H4.] $L_1L_2<0$, where 
\begin{equation*}
L_1=g(+\infty)\cdot\sum_{\mathcal{O}_{++}\cup \mathcal{O}_{-+}}[\Psi(i)]_n+g(-\infty)\cdot\sum_{\mathcal{O}_{+-}\cup \mathcal{O}_{--}}[\Psi(i)]_n
\end{equation*}
and
\begin{equation*}
L_2=g(-\infty)\cdot\sum_{\mathcal{O}_{++}\cup \mathcal{O}_{-+}}[\Psi(i)]_n+g(+\infty)\cdot\sum_{\mathcal{O}_{+-}\cup \mathcal{O}_{--}}[\Psi(i)]_n
\end{equation*}
\vspace{.25cm}
 \hspace{-1.75cm}Then there exists a solution to the nonlinear boundary value problem \eqref{E:Scalar}-\eqref{E:SBoundary}.
\end{enumerate}
 \end{theorem}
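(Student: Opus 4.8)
The plan is to obtain Theorem \ref{T:App} as a special case of Theorem \ref{T:Main}, together with its reversed form Remark \ref{R:Reverse}: the hypotheses H1--H4 are a rigid instance of C1--C4, with the Landesman--Lazer limits $g(\pm\infty)$ playing the role of the bounding functions. First I would note that H2 makes $g$ bounded, say $|g(x)|\le G$ for all $x$ (a continuous function with finite limits at $\pm\infty$ is bounded), so that $\norm{g}_d\le G$ for every $d$. Then the right-hand side of C3, $\frac{cs_{\max}+A\norm{g}_d(s_{\max}+s_{\min})}{s_{\min}}$, is at most the $d$-independent constant $\frac{cs_{\max}+AG(s_{\max}+s_{\min})}{s_{\min}}$, so once $c$ is fixed, C3 holds for all sufficiently large $d$. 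Thus boundedness of $g$ is exactly what makes C3 free, and since H3 is literally C1, the real work is to verify C2 and C4.

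Next I would construct the eight bounding functions as constants read off the limits. Fix $\ep>0$. By H1 the limits $g(\pm\infty)$ are genuine ($t$-independent) numbers, and by H2 I may choose $c$ so large that $|g(x)-g(+\infty)|<\ep$ for $x\ge c$ and $|g(x)-g(-\infty)|<\ep$ for $x\le -c$. On each of the four regions I take the relevant bound to be $g(\pm\infty)\pm\ep$, with the sign forced by the strict inequality demanded in C2; for instance on $\mathcal{O}_{++}$ I set $W_1=g(+\infty)-\ep$ and $U_1=g(-\infty)+\ep$, and analogously for the remaining six. Because $c$ was chosen so that $g$ is within $\ep$ of its limit on the whole half-line, every one of these inequalities holds on all of $[c,d]$ and $[-d,-c]$, no matter how large $d$ is later taken; this secures C2.

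It then remains to check C4. With the above choices, $K_1(i)$ equals $g(+\infty)\pm\ep$ on $\mathcal{O}_{++}\cup\mathcal{O}_{-+}$ and $g(-\infty)\pm\ep$ on $\mathcal{O}_{+-}\cup\mathcal{O}_{--}$, whence $|J_1-L_1|\le\ep\sum_i|[\Psi(i)]_n|$, and likewise $|J_2-L_2|\le\ep\sum_i|[\Psi(i)]_n|$; that is, $J_1\to L_1$ and $J_2\to L_2$ as $\ep\to0$. Since H4 gives $L_1L_2<0$, either $L_1>0>L_2$ or $L_1<0<L_2$. In the first case, taking $\ep<\min(|L_1|,|L_2|)/\sum_i|[\Psi(i)]_n|$ forces $J_2\le0\le J_1$, which is C4, and Theorem \ref{T:Main} applies. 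In the second case the same $\ep$ gives $J_1<0<J_2$; here one instead uses the sign-reversed bounds of Remark \ref{R:Reverse} (flipping every inequality in the construction above leaves $J_1\to L_1$ and $J_2\to L_2$ unchanged but now requires $J_1\le0\le J_2$, which holds), and Remark \ref{R:Reverse} applies. Either way the boundary value problem \eqref{E:Scalar}--\eqref{E:SBoundary} has a solution.

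The only genuinely delicate point is the order of the three choices: I must fix $\ep$ first (small enough to pin the signs of $J_1,J_2$ through the limits $L_1,L_2$), then $c$ (large enough for the $\ep$-approximation near $\pm\infty$, securing C2), and only then $d$ (large enough for C3, which costs nothing because $g$ is bounded). Beyond this coordination and the bookkeeping of which of the eight inequalities feeds $K_1$ versus $K_2$, the argument is a direct appeal to the already-established Theorem \ref{T:Main} and Remark \ref{R:Reverse}.
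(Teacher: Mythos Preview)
Your proposal is correct and follows essentially the same route as the paper: both arguments define the eight bounding functions as the constants $g(\pm\infty)\pm\ep$, compute $J_1,J_2$ as $\ep$-perturbations of $L_1,L_2$, choose $\ep$ small enough to preserve the signs dictated by H4, and split into the two cases via Theorem~\ref{T:Main} and Remark~\ref{R:Reverse}. The only cosmetic difference is that the paper packages the verification of C3 by invoking Remark~\ref{R:LL} (boundedness is a trivial instance of sublinear growth), whereas you verify C3 directly from the uniform bound $\norm{g}_d\le G$; these are the same observation.
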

 Theorem \ref{T:App} is a simple consequence of Theorem \ref{T:Main}.  To see this, suppose the conditions of Theorem \ref{T:App} hold and assume $L_2<0<L_1$. We will abuse notation, slightly, and use $g$ to denote the time dependent function defined on $\{0,\cdots, N\}\times \R$ by $g(t,x)=g(x)$.  Since $g(\pm\infty)$ exist, we must have that $g$ is bounded. Let $\ep>0$ and define the functions $W_1, U_1,  W_2, U_2, w_1 , u_1, w_2$ and $u_2$ in Theorem \ref{T:Main} as follows: 
$W_1(t)=g(+\infty)-\ep$, $U_1(t)=g(-\infty)+\ep$, $W_2(t)=g(+\infty)+\ep$, $U_2(t)=g(-\infty)-\ep$, $w_1(t)=g(-\infty)-\ep$, $u_1(t)=g(+\infty)+\ep$, $w_2(t)=g(-\infty)+\ep$, $u_2(t)=g(+\infty)-\ep$. It is clear that for these functions there exists an $R$, depending on $\ep$, such that \eqref{E:LL}  of Remark \ref{R:LL} holds. 

  Now,  if we calculate $\ds{J_1=\sum_{i=0}^{N-1}[\Psi(i)]_nK_1(i)}$, we get
\begin{equation*}
\begin{split}
\sum_{\mathcal{O}_{++}}[\Psi(i)]_n(g(+\infty)-\ep)&+\sum_{\mathcal{O}_{+-}}[\Psi(i)]_n(g(-\infty)-\ep)\\
&+\sum_{\mathcal{O}_{-+}}[\Psi(t)]_n(g(+\infty)+\ep)+\sum_{\mathcal{O}_{--}}[\Psi(i)]_n(g(-\infty)+\ep),
\end{split}
\end{equation*}
or 
\begin{equation*}
\begin{split}
g(+\infty)\cdot\sum_{\mathcal{O}_{++}\cup \mathcal{O}_{-+}}[\Psi(i)]_n+g(-\infty)\cdot\sum_{\mathcal{O}_{+-}\cup \mathcal{O}_{-+}}[\Psi(t)]_n-\sum_\mathcal{O}|[\Psi(i)]_n|\ep dt.
\end{split}
\end{equation*}
However, this is equal to $\ds{L_1-\sum_\mathcal{O}|[\Psi(i)]_n|\ep .}$ Similarly, $\ds{J_2=L_2+\sum_\mathcal{O}|[\Psi(i)]_n|\ep }$.  Since we are assuming $L_2<0<L_1$,  it is easy to see that for small enough $\ep$, $J_2<0<J_1$. The case where  $L_1<0<L_2$ follows from Remark \ref{R:Reverse} by a similar argument. The result is now a consequence of Remark \ref{R:LL}.

\begin{remark}
The above discussion shows that Theorem \ref{T:Main} is a substantial improvement of the result found in \cite{RodTaylorscalardiscrete}. It is a generalization in two significant ways.  Firstly, Theorem \ref{T:Main} does not require the nonlinearity, $g$, to be bounded as is required in \cite{RodTaylorscalardiscrete} where they impose that $g(\pm\infty)$ exist.  Secondly, the assumptions of Theorem \ref{T:Main} are required only on a bounded interval.  In Theorem \ref{T:App}  the existence of $g(\pm\infty)$ requires very specific behavior of $g$ on intervals of the form $[z_0,\infty)$ and $(-\infty,-z_0]$, for $z_0$ large.
\end{remark}

\subsection{Sturm-Liouville}
In \cite{MaSturm}, the author proves the existence of solutions to nonlinear Sturm-Liouville problems of the form
\begin{equation}\label{E:S}
\Delta(p(t-1)\Delta x(t-1))+q(t)x(t)+\lambda x(t)=f(x(t)); \ts\ts\ts t\in\{a+1, \cdots, b+1\}
\end{equation}
subject to 
\begin{equation}\label{E:SBC}
a_{11}x(a)+a_{12}\Delta x(a)=0 \text{ and } a_{21}x(b+1)+a_{22}\Delta x(b+1)=0,
\end{equation}

where throughout it is assumed that $f:{\R}\to \R, p:[0,1]\to\R$ and $q:[0,1]\to\R$ are continuous, $p(t)>0$ for all $t\in[0,1]$, $a^2+b^2, c^2+d^2>0$, and $\lambda$ is an eigenvalue of the associated linear Sturm-Liouville problem.

Their main result is the following: 
\begin{theorem}\label{T:Sturm}
 Suppose $f:\R\to \R$ satisfies $|f(x)|\leq M_1|x|^\beta +M_2$, where $M_1$ and $M_2$ are nonnegative constants and $\beta\in[0,1)$.  If there exist $z^*$ such that 
\begin{equation*}
\forall z> z^*, f(z)>0  \text{ and }  \forall z<-z^*, f(z)<0,
\end{equation*}
then there exists a solution to \eqref{E:S}-\eqref{E:SBC}.
\end{theorem}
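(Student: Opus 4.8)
The plan is to exhibit \eqref{E:S}--\eqref{E:SBC} as a special case of \eqref{E:Scalar}--\eqref{E:SBoundary} and then deduce the conclusion from Remark \ref{R:LL}, exactly in the spirit of the reduction of Theorem \ref{T:App}. First I would rewrite the second-order difference equation \eqref{E:S} as a first-order system of the form \eqref{E:System} with $n=2$: recording the state as $(x(t),x(t+1))$ puts the linear part $\Delta(p(t-1)\Delta x(t-1))+q(t)x(t)+\lambda x(t)$ into the companion form $x(t+1)-A(t)x(t)$, while the nonlinearity $f(x(t))$ becomes $f(t,\cdot)$ acting on the first coordinate, so that $g(t,x)=f(x)$ is independent of $t$ and $m=1$. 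The two-point conditions \eqref{E:SBC} are encoded in the matrices $B_0,\dots,B_N$, and the hypothesis that $\lambda$ is an eigenvalue of the linear Sturm--Liouville problem says precisely that the homogeneous problem has a nontrivial (and, for the regular two-point problem, one-dimensional) solution space, so the framework of Section 2 applies.

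The decisive structural input is self-adjointness. As noted in the discussion preceding Corollary \ref{C:Sign}, for a self-adjoint problem the generator $S$ of $Ker(\mathcal{L})$ and the functional $\Psi$ describing $Im(\mathcal{L})$ are aligned in the relevant coordinates. I would verify this concretely by computing $\Phi$, the kernel vector $u$ with $S(t)=\Phi(t)u$, and the adjoint kernel vector $w$ with $\Psi^T(t)=\sum_{i=t+1}^N w^TB_i\Phi(i)\Phi^{-1}(t+1)$, and then using the symmetry of the discrete Sturm--Liouville operator (summation by parts together with the boundary conditions \eqref{E:SBC}) to show that $[\Psi(i)]_n$ and $[S(i)]_m$ carry the same sign for every $i$. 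This has two consequences: the sets $\mathcal{O}_{+-}$ and $\mathcal{O}_{-+}$ are empty, and whenever $[S(i)]_m=0$ we also have $[\Psi(i)]_n=0$, so that $\mathcal{O}_0$ is empty; the latter is condition C1 of Theorem \ref{T:Main}.

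With $\mathcal{O}=\mathcal{O}_{++}\cup\mathcal{O}_{--}$, the conditions \eqref{E:LL} of Remark \ref{R:LL} collapse to inequalities on $\mathcal{O}_{++}$ and $\mathcal{O}_{--}$ only, and the sign hypothesis on $f$ supplies these with room to spare: taking $R=z^*$ and $W_1=U_1=u_2=w_2\equiv 0$, the bounds $f(z)>0$ for $z>z^*$ and $f(z)<0$ for $z<-z^*$ give exactly \eqref{E:LL}. Consequently $K_1\equiv K_2\equiv 0$ on $\mathcal{O}$, so $J_1=J_2=0$ and condition C4 ($J_2\leq 0\leq J_1$) holds. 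Finally, the growth bound $|f(x)|\leq M_1|x|^\beta+M_2$ with $\beta\in[0,1)$ is precisely the sublinear hypothesis of Remark \ref{R:LL}, which forces condition C3 of Theorem \ref{T:Main} via $\lim_{r\to\infty}\frac{1}{r}\cdot\frac{Rs_{\max}+A\norm{g}_r(s_{\max}+s_{\min})}{s_{\min}}=0$. Having verified C1, C3, and C4, Remark \ref{R:LL} (hence Theorem \ref{T:Main}) delivers a solution.

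I expect the main obstacle to be the second step: proving the sign alignment of $[\Psi(i)]_n$ and $[S(i)]_m$, and the emptiness of $\mathcal{O}_0$, directly from the discrete Sturm--Liouville structure. The reduction to a first-order system is bookkeeping and the final appeal to Remark \ref{R:LL} is immediate, but the self-adjoint sign relation requires an explicit identification of the adjoint eigenvector $w$ with the eigenvector $u$ through summation by parts, and care that the boundary matrices $B_k$ built from \eqref{E:SBC} actually produce the symmetric bilinear form that underlies this identification. One subtlety to watch is that the companion-form state does not make symmetry manifest, so the computation may be cleaner after a change of variables to the pair $(x(t),\,p(t-1)\Delta x(t-1))$.
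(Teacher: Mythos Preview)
Your proposal is correct and follows essentially the same route as the paper: reduce \eqref{E:S}--\eqref{E:SBC} to the general framework with $n=2$, $m=1$, use the self-adjoint structure to align $[\Psi(i)]_n$ with $[S(i)]_m$, and then invoke the sublinear case of Remark~\ref{R:LL}. The paper's argument differs only in packaging: it asserts the slightly stronger fact that $[\Psi(i)]_2$ and $[S(i)]_1$ may be chosen \emph{equal} (not merely of the same sign), which makes $\mathcal{O}_0=\emptyset$ automatic, and then appeals directly to Corollary~\ref{C:Sign} rather than re-deriving its content by setting $W_1=U_1=u_2=w_2\equiv 0$.
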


Theorem \ref{T:Sturm} is also a consequence of Theorem \ref{T:Main}.  This follows from the fact that in the case of the Sturm-Liouville problem, because of the self-adjointness associated with it, $[\Psi(i)]_2$ and $[S(i)]_1$ (Theorem \ref{T:Main}), may be chosen to be equal. The result is now a consequence of Corollary \ref{C:Sign} and Remark \ref{R:LL}.
 
 \section{Example}
 We now provide an example which shows the application of Theorem \ref{T:Main}.
Consider
 \begin{equation*}
 y(t+2)+y(t+1)+y(t)=g(y(t+1))\hspace{.25cm} 
  \end{equation*}
 subject to 
 \begin{equation*}
y(5)+y(8)+y(9)=0
 \end{equation*}
 and
 \begin{equation*}
 y(2)+y(8)+y(9)=0
 \end{equation*}
 
   Looking at equations (\ref{E:Scalar}) and (\ref{E:SBoundary}), we see that $n=m=2$.  Writing this in system form, we have 
  \begin{equation*}
    x(t+1)=Ax(t)+f(x(t))
    \end{equation*}
    subject to 
    \begin{equation*}
    B_2x(2)+B_5x(5)+B_8x(8)=0,
    \end{equation*}
  where
  \begin{equation*}
  x(t)=\begin{bmatrix}y(t)\\y(t+1)\end{bmatrix},
  \end{equation*}
 \begin{equation*}
 A=\begin{bmatrix} 0&1\\ -1&-1\\ \end{bmatrix},
 \end{equation*}
 and
 \begin{equation*} B_2=\begin{bmatrix} 0&0\\1&0\end{bmatrix},
B_5=\begin{bmatrix} 1&0\\ 0&0\end{bmatrix} \text{ and }  B_8=\begin{bmatrix} 1&1\\1&1\end{bmatrix}.
\end{equation*}

Since $A$ is constant, it follows that $\Phi(t)=A^t$.  
We then have that 
\begin{equation*}
 B_2\Phi(2)+B_5\Phi(5)+B_8\Phi(8)=\begin{bmatrix}-1&-2\\-1&-2\end{bmatrix}.
 \end{equation*}
 
 If we choose $\begin{bmatrix}2\\-1\end{bmatrix}$ as a basis for $Ker( B_2\Phi(2)+B_5\Phi(5)+B_8\Phi(8))$, then we get 
 \begin{equation*}
 S(t)=
 \begin{cases}
 \begin{bmatrix}2\\-1\end{bmatrix}& \text{ if  } t\equiv 0 \text{ mod } 3\\\\
 \begin{bmatrix}-1\\-1\end{bmatrix}& \text{ if  } t\equiv 1 \text{ mod } 3\\\\
 \begin{bmatrix}-1\\2\end{bmatrix}& \text{ if  } t\equiv 2 \text{ mod } 3
 \end{cases}.
 \end{equation*}
 
We now take $\begin{bmatrix} -1\\1\end{bmatrix}$ as a basis for $Ker\left( (B_2\Phi(2)+B_5\Phi(5)+B_8\Phi(8))^T\right)$, which gives 
\begin{equation*}
\Psi(t)=
\begin{cases}
\begin{bmatrix}0&0\end{bmatrix}^T&\text{ if } t=0,1,5,6,7\\\\
\begin{bmatrix}1&1\end{bmatrix}^T&\text{ if } t=2\\\\
\begin{bmatrix}0&-1\end{bmatrix}^T&\text{ if } t=3\\\\
\begin{bmatrix}-1&0\end{bmatrix}^T&\text{ if } t=4\\\\
\end{cases}.
\end{equation*}
Observe, $\mathcal{O}_{++}=\{2\}, \mathcal{O}_{+-}=\emptyset, \mathcal{O}_{-+}=\emptyset, \mathcal{O}_{--}=\{3\}$, and $\mathcal{O}_0=\emptyset$.

Notice that $[\Psi(t)]_2[S(t)]_2\geq0$ for all $t=0\cdots N-1$, so that Theorem \ref{T:Main} is applicable for an abundance of real-valued functions, $g$, provided $g$ is such that conditions C3*. and C4*. of Corollary \ref{C:Sign} hold for some positive real numbers $c$ and $d$. We point out again, as in Remark \ref{R:LL}, that if C3*. holds eventually, then C4*. is automatically satisfied.

Further, if the end behavior of $g$ is not `sign-changing', then  (\ref{E:Scalar})-(\ref{E:SBoundary}) may still have a solution. It is of interest to note that this may happen for a $g$ which satisfies $\ds{\lim_{x\to\infty}g(\pm x)}=\infty$ (or $\ds{\lim_{x\to\infty}g(\pm x)}=-\infty$), and so is not of standard Landesman-Lazer form.

For a specific instance of this, fix $c>2$ and let $g$ be a continuous function with 
\begin{equation*}
g(x)=
\begin{cases}
\ds{\beta\ln(1+|x|)}& \text{ if } x\in[-d,-c]\\
\ds{\gamma\ln(1+x)}& \text{ if } x\in[c,d]\
\end{cases},
\end{equation*}
where $d=e^{\gamma}(1+c)-1$, $0<\beta<1$, and $\gamma>\ds\frac{\ln(1+c)}{\ln(1+c)-1}$. We will assume that $g(x)<\beta\ln(1+c)$ for $x\in [-c,0]$ and $g(x)<\gamma\ln(1+c)$ for $x\in[0,c]$.

 Let $f(x):=e^x(1+c)-1-(2c+3A\ln(1+c)x^2)$, where $A$ is as in the definition of Theorem \ref{T:Main} and choose $x_c$ such that if $x>x_c$, then $f(x)> 0$. If $\gamma>x_c$, then we have the following:
 \begin{enumerate}
 \item $g(-d)=\beta\ln(1+d)< \ln(1+d)=\gamma+\ln(1+c)<\gamma\ln(1+c)=g(c)$\label{E:sign}
 \item\label{E:C3} \beqq
\begin{split}{cs_{\max}+A\norm{g}_d(s_{\max}+s_{\min})\over s_{\min}}&=2c+3A\gamma\ln(1+d)\\
&<2c+3A\ln(1+c)\gamma^2\\&<e^{\gamma}(1+c)-1\\&=d.
\end{split}
 \eeqq
 \end{enumerate}
 
We now verify that conditions C1.-C4. of Theorem \ref{T:Main} are satisfied for this choice of $g$ and $c,d$.  It has already been noted that $\mathcal{O}_0$ is empty, so that C1. holds.  In this specific example, C2. reduces to the existence of numbers $w_1, w_2, u_1$, and $u_2$ such that 
 \begin{equation*}
 \begin{split}
 &\text{if } x\in[c,d], \text{ then }  w_1<g(x):=g(2,x) \\
 &\text{if } x\in[-d,-c], \text{ then } g(2,x):=g(x)<u_1\\
 &\text{if } x\in[c,d], \text{ then } u_2<g(x)=:g(3,x) \\
 &\text{if } x\in[-d,-c], \text{ then } g(3,x):=g(x)<w_2.
 \end{split}
 \end{equation*}
We take $w_1=g(c), w_2=g(-d), u_1=g(-d)$, and  $u_2=g(c)$, so that C2. is clearly satisfied for these choices of $w_1, w_2, u_1$, and $u_2$. \eqref{E:C3} shows that condition C3.  holds.  Finally, if $J_1$ and $J_2$ are as in Theorem \ref{T:Main}, then we have that $J_1=w_1-w_2=g(c)-g(-d)>0$ and $J_2=u_1-u_2=g(-d)-g(c)<0$ (see \eqref{E:sign}). Thus, C4. is satisfied and we conclude, using Theorem \ref{T:Main}, that there exists a solution to the nonlinear boundary value problem \eqref{E:Scalar}-\eqref{E:SBoundary}.

 \end{document}